\documentclass[12pt]{amsart}

\usepackage{amsthm, amssymb, amstext, amscd, amsfonts, amsxtra, latexsym, amsmath, comment, 
 mathrsfs, esint, stmaryrd,cancel}
\usepackage{color}
\usepackage[svgnames]{xcolor}
\usepackage{fancybox}
\usepackage{fullpage}
\usepackage[english]{babel}
\usepackage[latin1]{inputenc}
\usepackage{verbatim}
\usepackage{color}
\usepackage[all]{xy}
\usepackage[mathscr]{eucal}
\usepackage{enumitem}


\numberwithin{equation}{section}
\newtheorem{theorem}{Theorem}
\numberwithin{theorem}{section}
\newtheorem{lemma}[theorem]{Lemma}
\newtheorem{proposition}[theorem]{Proposition}

\theoremstyle{remark}
\newtheorem*{remark}{Remark}


\newcommand{\Z}{\mathbb{Z}}
\newcommand{\N}{\mathbb{N}}


\renewenvironment{proof}[1][Proof]{\begin{trivlist} \item[\hskip \labelsep {\bfseries #1:}]}{\qed\end{trivlist}}

\title{Improved bounds for Fourier coefficients of Siegel modular forms}
\author{Kathrin Bringmann} 
\address{Mathematical Institute\\University of
Cologne\\ Weyertal 86-90 \\ 50931 Cologne \\Germany}
\email{kbringma@math.uni-koeln.de}
\thanks{ The research of the author was supported by the Alfried Krupp Prize for Young University Teachers of the Krupp foundation and the research leading to these results has received funding from the European Research Council under the European Union's Seventh Framework Programme (FP/2007-2013) / ERC Grant agreement n. 335220 - AQSER}

\begin{document}
\maketitle

\section{Introduction and statement of results}
The goal of this paper is to improve existing bounds for Fourier coefficients of higher genus Siegel modular forms of small weight.

To recall what is known for genus 1, let $\Delta(\tau):=q\prod_{n=1}^{\infty}(1-q^n)^{24}$ $(q:=e^{2\pi i\tau})$ be the classical $\Delta$-function and denote by $\tau(n)$ its Fourier coefficients. The {\it Ramanujan conjecture} states that, for $p$ prime,
$$
|\tau(p)|\leq2p^{\frac{11}{2}}.
$$
This conjecture has been generalized for general, positive integral weight modular forms. The so-called {\it Ramanujan-Petersson conjecture} states that if $f(\tau)=\sum_{n=1}^{\infty}a(n)q^n$ is a weight $k$ cusp form on a congruence subgroup, then, as $n\rightarrow \infty$,
\begin{equation}\label{RP}
	a(n)\ll_{\varepsilon,f} n^{\frac{k-1}{2}+\varepsilon}\qquad\qquad (\varepsilon>0).
\end{equation}
The estimate \eqref{RP} follows from Deligne's proof of the Weil conjectures \cite{De,DS}, using highly complicated methods from algebraic geometry.

\indent
There are many related conjectures for more complicated types of automorphic forms. In this paper, we consider the case of Siegel modular form of genus $g>1$. For this, let $F$ be a cusp form of weight $k\in\mathbb{N}$ with respect to the Siegel modular group
         $\Gamma_g:= \text{Sp}_g(\mathbb{Z})\subset \text{GL}_{2g}(\mathbb{Z})$ with Fourier coefficients
         $a(T)$, where $T$ is a positive definite symmetric half-integral $g \times g$ matrix.
                  Then a conjecture of Resnikoff and Salda\~na \cite{RS} says that
\begin{displaymath}
a(T) \ll_{\varepsilon,F} \det (T)^{\frac{k}{2}- \frac{(g+1)}{4}+  \varepsilon}   \qquad  (\varepsilon >0).
\end{displaymath}
For $g=1$ this is exactly the Ramanujan-Petersson conjecture. For higher genus $g$, however, there are counterexamples coming from lifts (cf. \cite{K2}).

 For $k>g+1$, the best known estimate is
 \begin{eqnarray} \label{best}
a(T) \ll_{\varepsilon, F} \det (T)^{\frac{k}{2} - c_g + \varepsilon }     \qquad (\varepsilon >0),
\end{eqnarray}
where 
\begin{displaymath}
c_g:= \left\{
\begin{array}{lll}
\frac{13}{36} & \text{ if } g=2 & \text{\cite{K2}},\\[1ex]
\frac{1}{4} & \text{ if } g=3 & \text{\cite{Bre}},\\[1ex]
\frac{1}{2g} + \left(1- \frac{1}{g} \right)\alpha_g & \text{ if } g>3& \text{\cite{BK}.}
\end{array}   \right.
\end{displaymath}     Here
\begin{displaymath}
\alpha_g^{-1}:= 4(g-1)+ 4 \left[ \frac{g-1}{2} \right]     + \frac{2}{g+2}.
\end{displaymath}
In \cite{Bri} and \cite{BYang} it was shown that (\ref{best}) still holds for $k=g+1 \text{ and }k=g$, respectively. Moreover, for $ (g+3)/2 < k < g$, we have \cite{BYang} 
\begin{equation}\label{TKbound}
a(T) \ll_{\varepsilon ,F} \det (T)^{\frac{k}{2} - \left( 1-\frac{1}{g}\right) \alpha_g +\varepsilon} .
\end{equation}
\indent
In this paper we improve \eqref{TKbound} and obtain

\begin{theorem}\label{maintheorem}
We have for $g/2+1<k<g$
$$
a(T) \ll_{\varepsilon ,F} \operatorname{det}(T)^{\frac{k}{2}+ \frac{g-k}{2g(g-2)}-\frac{1}{2g} - \left(1-\frac{1}{g}\right)\alpha_g+\varepsilon}.
$$
\end{theorem}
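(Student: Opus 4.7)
The plan is to follow the overall strategy of \cite{BYang} that produced \eqref{TKbound}, but to insert a sharper bound on the intermediate Jacobi forms. First, I would apply the Fourier--Jacobi expansion of $F$ relative to a block decomposition
$$Z=\begin{pmatrix} \tau_1 & z \\ z^t & \tau_2 \end{pmatrix}, \qquad T=\begin{pmatrix} T_1 & R/2 \\ R^t/2 & m \end{pmatrix},$$
with $T_1$ of size $(g-j)\times(g-j)$ and $m$ of size $j\times j$, writing $F(Z)=\sum_m \phi_m(\tau_1,z)\,e^{2\pi i\operatorname{tr}(m\tau_2)}$. Each $\phi_m$ is a Jacobi cusp form of weight $k$ and matrix index $m$ on the Jacobi group of genus $g-j$, and $a(T)$ is its $(T_1,R)$-Fourier coefficient. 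The Schur complement identity $\det T=\det m \cdot \det\bigl(T_1-\tfrac14 R m^{-1}R^t\bigr)$ will be used to translate any Jacobi bound back to a bound on $a(T)$.

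Next, I would exploit the theta decomposition $\phi_m=\sum_\mu h_\mu(\tau_1)\,\theta_{m,\mu}(\tau_1,z)$ to replace the Jacobi form by a vector-valued modular form $(h_\mu)_\mu$ of weight $k-j/2$ transforming under a Weil representation. Bounding the Fourier coefficients of $(h_\mu)$ by an averaged method (Petersson trace formula or Rankin--Selberg), combined with the trivial estimate on the theta series, should produce an estimate on the $(T_1,R)$-coefficient that is sharper than the one used in \cite{BYang}. The new factor $\frac{g-k}{2g(g-2)}$ should emerge because the auxiliary weight $k-j/2$ crosses the cohomological threshold inside the interval $g/2+1<k<g$, so that the strength of the available bound depends essentially on $g-k$.

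Finally, one optimizes the block size $j$ and, if necessary, splits the sum over $m$ into dyadic ranges in $\det m$. This optimization balances the growth of the number of theta components, the Jacobi Fourier bound, and the Schur complement conversion factor; the resulting saving over \eqref{TKbound} should be exactly $\frac{1}{2g}-\frac{g-k}{2g(g-2)}=\frac{k-2}{2g(g-2)}$ in the exponent.

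The main obstacle, and the crux of the argument, will be the passage from averaged to pointwise bounds on the coefficients of $(h_\mu)$: since $k-j/2<(g+1)/2$, Deligne-type estimates are unavailable and one must rely on convexity-style bounds combined with careful weighted averaging. Maintaining uniformity in both $k$ and $g$ throughout the optimization, and extracting the precise interpolating exponent $\frac{g-k}{2g(g-2)}-\frac{1}{2g}$, will require delicate bookkeeping at each stage.
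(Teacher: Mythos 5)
Your proposal is a research plan rather than a proof, and the step you yourself flag as ``the crux'' --- converting averaged bounds on the theta components $(h_\mu)$ into pointwise bounds strong enough in the $\det(2m)$-aspect --- is precisely where the content of the theorem lives, and it is left entirely unexecuted. Concretely: after the theta decomposition the $h_\mu$ are vector-valued forms of weight $k-j/2$; for $j<g-1$ they are genus-$(g-j)$ Siegel modular forms, for which no usable Petersson/Rankin--Selberg pointwise coefficient bound is available, and for $j=g-1$ they are elliptic forms of weight $k-(g-1)/2\in(3/2,(g+1)/2)$, i.e.\ exactly the small-weight regime where Poincar\'e series diverge and Deligne-type input is absent. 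Rankin--Selberg controls coefficients on average over the index, and the loss incurred in passing to an individual coefficient is of the same order as the saving you are trying to produce; nothing in the proposal shows this loss can be avoided. The specific exponent $\frac{g-k}{2g(g-2)}-\frac{1}{2g}$ is asserted to ``emerge'' from an optimization over $j$ that is never carried out, so the claimed saving of $\frac{k-2}{2g(g-2)}$ is not derived from anything.

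For comparison, the paper's mechanism is quite different and entirely explicit: it fixes $j=g-1$, uses the Jacobi Poincar\'e series $P_{k,m;(n,r)}$ and the Petersson coefficient formula (Lemma 2.2) to reduce everything to the diagonal Fourier coefficient $b_{n,r}(P_{k,m;(n,r)})$, and the new input is the Kloosterman-sum estimate $H^\pm_{m,c}(n,r)\ll (D,c)\,c^{\frac{g+1}{2}}\det(2m)^{\frac12}$ of Lemma \ref{KloosLemma}, proved by explicit Gauss-sum evaluations. That bound, inserted into the Bessel-function expansion of Theorem \ref{Jacobibound} with a three-range splitting of the sum over $c$ and an optimized cutoff $B=d^{-1}\det(2m)^{1/(g-1)}$, is what produces the exponent; the conclusion then follows from the B\"ocherer--Kohnen norm bound $\lVert\phi_m\rVert\ll\det(2m)^{k/2-\alpha_g+\varepsilon}$ and reduction theory ($\det(m)\ll D^{1-1/g}$). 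If you want to pursue the theta-decomposition route, you would need to supply, at minimum, a pointwise coefficient bound for the $h_\mu$ uniform in $m$ with an explicit power of $\det(2m)$, and then actually perform the optimization; as written, the proposal does not establish the theorem.
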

\begin{remark}
Theorem \ref{maintheorem} is indeed an improvement since $$\frac{g-k}{2g(g-2)}-\frac{1}{2g}<0.$$
\end{remark}

Our proof follows the idea of \cite{BK} using a Jacobi decomposition of Siegel modular forms. Our main achievement is an improved bounds for Kloosterman sums.

The paper is organized as follows. In Section 2 we recall basic facts about Jacobi forms and their relation to Siegel modular forms. In Section 3 we bound higher dimensional Kloosterman sums. Section 4 is devoted estimating coefficients of Poincar\'e series, in Section 5 we then conclude our main theorem.

\section*{Acknowledgments}
The author thanks Winfried Kohnen and Mike Woodbury for comments on an earlier version of this paper.

\section{Preliminaries}
\subsection{Basic facts on Jacobi forms}

 Here we recall some basic facts about Jacobi cusp forms; for details we refer the reader to \cite{EZ} and \cite{Zi}.
  The Jacobi group
       \begin{math}
       \Gamma_{g}^J   := \text{SL}_2 (\mathbb{Z})\ltimes (\mathbb{Z}^g \times \mathbb{Z}^g)     \end{math}
         acts on $\mathbb{H}  \times \mathbb{C}^{g}$ in the usual way by ($\left(\begin{smallmatrix} a&b \\c&d \end{smallmatrix} \right) \in \text{SL}_2 (\mathbb{Z}), (\lambda ,\mu )\in \mathbb{Z}^g$)
\begin{displaymath}
\left(
\left(\begin{matrix} a &b \\ c &d \end{matrix} \right)
 ,(\lambda, \mu)\right)  \circ (\tau,z):=
\left( \frac{a \tau+b}{c \tau+d},
\frac{z+ \lambda \tau+ \mu}{c \tau +d}\right).
\end{displaymath}
Note that throughout vectors are viewed as columns unless noted otherwise.
        Let $k \in \mathbb{N}$, $m$ be a positive definite symmetric half-integral $g \times g$ matrix,
          $\gamma= \left(  \left(
         \begin{smallmatrix} a & b \\ c & d \end{smallmatrix}
          \right) ,
         (\lambda, \mu)\right) \in \Gamma_{g}^J
       $, and
$\phi : \mathbb{H} \times \mathbb{C}^{g} \to \mathbb{C}$.
Then we define  the following {\it Jacobi slash action}
 \begin{multline*}
\phi|_{k,m} \gamma (\tau,z):= (c \tau+d)^{-k} 
e\left(-c \left(c \tau+d)^{-1}  m[z+ \lambda \tau+ \mu]
+ m[\lambda]\tau + 2  \lambda^T m z \right)\right)
 \phi(\gamma \circ (\tau,z)),
\end{multline*}
where
\begin{math}
e(w):=e^{2 \pi i w} \quad \left(\,\forall \, w \in \mathbb{C}\right),
\end{math}
and where $A[B]:= B^T AB$ for matrices $A$ and $B$ of compatible sizes.

 A holomorphic function  $\phi : \mathbb{H}   \times \mathbb{C}^{g} \to \mathbb{C}$ is called a {\it Jacobi cusp form} of weight $k$
  and index $m$ with respect to $\Gamma_g^J$, if, for all  $\gamma \in \Gamma_g^J$,   we have
  \begin{math} \phi|_{k,m} \gamma =\phi,\end{math} and $\phi$
   has a Fourier expansion of the form
\begin{displaymath}
        \phi(\tau,z)=
 \sum_{D>0}
c(n,r)e\left(n \tau +r^T z \right),
        \end{displaymath}
        where   $D:= \det   \left(\begin{smallmatrix}2n&r^T\\  r &2m\end{smallmatrix}\right) $
        with $n \in \N$ and $r \in \Z^g$.
We denote by $J_{k,m}^{\text{cusp}}$  the vector space of Jacobi cusp forms. 

The space $J_{k,m}^{\text{cusp}}$ is   a finite dimensional Hilbert  space with  the Petersson scalar product
\begin{eqnarray*}
\left<\phi, \psi \right>  :=
\int_{\Gamma_g^J \backslash \mathbb{H}\times \mathbb{C}^{g}}
\phi (\tau,z)\overline{\psi (\tau,z)} 
 \exp\left(
 - 4 \pi m[y]\cdot v^{-1}\right) v^k dV_g^J,
\end{eqnarray*}
where $dV_g^J := v^{-g-2} du dv dx dy$, $\tau= u + iv$,
and $ z=x+iy$.
\subsection{Jacobi Poincar\'e series}
We next recall certain Jacobi Poincar\'e series, as considered in \cite{BK}. For $ n \in \Z$, $r \in \Z^g$, and $m$ a positive definite symmetric half-integral $g \times g$ matrix such that $ 4 n > m^{-1} [r]$, define a  {\it Poincar\'e series of exponential type} by
\begin{equation}\label{JacPo}
P_{k, m; (n,r)}(\tau,z) := \sum_{\gamma \in \Gamma_{g,\infty}^{J}\big\backslash \Gamma_{g}^{J}} e^{n,r} \bigg|_{k,m}\gamma(\tau,z),
\end{equation}
where $e^{n,r}(\tau,z):= e^{2\pi i(n\tau + r^T z)}$ and 
$
\Gamma_{g,\infty}^{J} := \{ ((\begin{smallmatrix} 1 & n \\ 0& 1\end{smallmatrix}), (0, \mu)) | n \in \Z, \mu \in \Z^{g}\}
$
is the stabilizer group of $e^{n,r}$. For $ k > g+2$, $P_{k,m; (n,r)} \in  J_{k,m}^{\text{cusp}}$ 
and the Petersson coefficient formula holds $( \phi \in J_{k,m}^{\text{cusp}}$ with Fourier coefficients $c_{\phi})$,
\begin{equation}\label{Pet}
\left< \phi, P_{k,m;(n,r)}\right> = \lambda_{k,m,D} c_{\phi}(n,r),
\end{equation}
where 
$$
\lambda_{k,m,D} := 2^{-\frac{g}{2}}\Gamma\left(k-\frac{g}{2}-1\right) (2\pi)^{-k + \frac{g}{2} +1} \det(2m)^{k - \frac{g+3}{2}} D^{-k+ \frac{g}{2}+1}.
$$

For $k \leq g+2$ the Poincar\'e series \eqref{JacPo} diverge. However there is a way to analytically continue them, using the  so-called Hecke trick. We denote the corresponding functions again by $P_{k,m;(n,r)}$. We have \cite{BK, Bri, BYang}:
\begin{proposition}\label{Petprop}
For $k>g/2+2$, the functions $P_{k,m;(n,r)}$ are elements of $J_{k,m}^{\operatorname{cusp}}$. We have the Fourier expansions
$$
P_{k,m;(n,r)}(\tau,z)=\sum_{\substack{ n' \in \mathbb{Z}, r' \in \mathbb{Z}^g \\ D'>0}}g_{k,m;(n,r)}^\pm (n',r')e\left(n'\tau+r'^Tz\right),
$$
where $D':=\operatorname{det}\left(\begin{smallmatrix}2n'&r'^T\\  r' &2m\end{smallmatrix}\right) $ and
\begin{equation*}
g_{k,m;(n,r)}^\pm(n',r'):=g_{k,m;(n,r)}(n',r')+(-1)^k g_{k,m;(n,r)}(n', -r')
\end{equation*}
with
\begin{multline}\label{defineg}
g_{k,m;(n,r)}(n',r'):=\delta_m(n,r,n',r')+2\pi i^k\det(2m)^{-\frac{1}{2}}\left(\frac{D'}{D}\right)^{\frac{k}{2}-\frac{g}{4}-\frac{1}{2}}\\\times \sum_{c\geq 1} e_{2c}\left(r^T m^{-1}r'\right) H_{m,c}(n,r,n',r')J_{k-\frac{g}{2}-1}\left(\frac{2\pi\sqrt{DD'}}{\det (2m)c}\right)c^{-\frac{g}{2}-1}.
\end{multline}
\noindent
Here $e_c (x):= e^{\frac{2\pi i x}{c}}$,
$$
\delta_m(n,r,n',r'):=
\begin{cases}
1&\text{if }D'=D \text{ and } r'- r \in 2m\Z^g,\\
0&\text{otherwise}
\end{cases}
$$ 
and the Kloosterman sums
$$
H_{m,c}(n,r,n',r'):=\sum_{\substack{\lambda \pmod{c}\\d\pmod{c}^*}}e_c\left((m[\lambda]+r^T\lambda+n)\overline{d}+n'd+r'^T\lambda\right),
$$
where by $\lambda \pmod{c}$, we mean that all components run $\pmod{c}$ and $d\pmod{c}^*$ sums only over $d\pmod{c}$ which are coprime to $c$.
Moreover formula \eqref{Pet} holds.
\end{proposition}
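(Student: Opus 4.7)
The plan is to use the standard Petersson unfolding technique for the Jacobi Poincar\'e series. I would first assume $k > g+2$, so that the series \eqref{JacPo} converges absolutely, and compute the Fourier expansion by hand; the range $g/2+2 < k \leq g+2$ is then reached by Hecke's analytic continuation, which is the main technical point.

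Concretely, I would parameterize a set of representatives for $\Gamma_{g,\infty}^J \backslash \Gamma_g^J$ by a single identity/cusp representative (with $c=0$) together with, for each $c \geq 1$, representatives indexed by $d \pmod{c}^*$ and $\lambda \pmod{c}$ (the $\mu$-component being absorbed into the stabilizer, and $(a,b)$ being determined by $(c,d)$ modulo $\Gamma_{\infty}$). To read off the Fourier coefficient $g_{k,m;(n,r)}^\pm(n', r')$, I would integrate $P_{k,m;(n,r)}(\tau,z)\, e(-n' u - r'^{T} x)$ over $u \in [0,1]$ and $x \in [0,1]^g$ with $v = \operatorname{Im}(\tau)$ and $y = \operatorname{Im}(z)$ held fixed; the $y$-dependence will drop out by holomorphy.

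The contribution of $c=0$ reduces to $\delta_m(n,r,n',r') + (-1)^k \delta_m(n,r,n',-r')$, the $(-1)^k$ piece coming from the representative $(\lambda,\mu) \mapsto (-\lambda,-\mu)$. For $c \geq 1$, substituting the explicit Jacobi slash action and performing the shift $u \mapsto u - d/c$ decouples the finite sum over $(d,\lambda)$ from the remaining real-variable integral; the finite sum collapses precisely to the Kloosterman sum $H_{m,c}(n,r,n',r')$. A Gaussian integration over $x \in \mathbb{R}^g$ yields the factor $\det(2m)^{-1/2}$, and the remaining integral over $u \in \mathbb{R}$ is a classical Hankel-type integral evaluating to $J_{k-g/2-1}\bigl(2\pi\sqrt{DD'}/(\det(2m)c)\bigr)$ times the power $(D'/D)^{k/2 - g/4 - 1/2}$. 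Summing over $c$ gives \eqref{defineg}; cuspidality at the singular cusp (vanishing of the $(n',r')$-coefficient for $D' \leq 0$) is a direct consequence of the same integral representation.

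The main obstacle is the regime $g/2+2 < k \leq g+2$, where \eqref{JacPo} no longer converges. The remedy is Hecke's trick: insert a non-holomorphic convergence factor $|c\tau+d|^{-2s}$, run the above Fourier coefficient calculation for $\operatorname{Re}(s)$ large, and analytically continue to $s=0$. Absolute convergence of the resulting $c$-series already at $s=0$ for $k > g/2+2$ follows from combining the small-argument estimate $J_\nu(x) \ll x^\nu$ (with $\nu = k - g/2 - 1$) with a Weil-type bound of the shape $|H_{m,c}| \ll c^{g/2 + 1 + \varepsilon}$: the tail is of size $c^{-(k - g/2 - 1) + \varepsilon}$, summable precisely when $k > g/2+2$. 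Holomorphy at $s=0$, modularity, cuspidality, and the Petersson formula \eqref{Pet} then all pass from the convergent regime to $s=0$ by analytic continuation; \eqref{Pet} itself comes from the usual unfolding of $\langle \phi, P_{k,m;(n,r)}(\cdot;s)\rangle$ against a fundamental domain of $\Gamma_{g,\infty}^J$, followed by evaluation of an elementary integral in $v$ that produces the explicit constant $\lambda_{k,m,D}$.
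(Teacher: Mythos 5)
The paper does not actually prove this proposition; it is quoted from \cite{BK}, \cite{Bri}, and \cite{BYang}. Your unfolding computation for $k>g+2$ --- coset representatives indexed by $c\ge 1$, $d\pmod{c}^*$, $\lambda\pmod{c}$ plus the $c=0$ term, the Gaussian $x$-integral producing $\det(2m)^{-1/2}$, the $u$-integral producing the $J$-Bessel factor together with the power of $D'/D$, and the collapse of the finite exponential sum to $H_{m,c}(n,r,n',r')$ --- is exactly the standard argument of those references, as is the derivation of \eqref{Pet} by unfolding the Petersson inner product. (One small slip: the term $(-1)^k g_{k,m;(n,r)}(n',-r')$ arises from the coset of $-I\in\mathrm{SL}_2(\Z)$, which is not contained in $\Gamma^J_{g,\infty}$ as defined, rather than from the substitution $(\lambda,\mu)\mapsto(-\lambda,-\mu)$.)

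The genuine gap is in your final paragraph. After inserting the convergence factor, $P_{k,m;(n,r)}(\tau,z;s)$ is only real-analytic, and its Fourier expansion for general $s$ contains, in addition to the terms you computed, coefficients with $D'\le 0$ as well as non-holomorphic completions of the $D'>0$ terms (confluent hypergeometric, respectively incomplete Gamma, factors in $v=\operatorname{Im}\tau$). Holomorphy, cuspidality, and the shape \eqref{defineg} of the coefficients do \emph{not} simply ``pass to $s=0$ by analytic continuation'' from the convergent regime: one must compute these extra terms explicitly, continue them termwise (which is where the Kloosterman-sum bound of strength roughly $c^{g/2+1}$ is needed to justify interchanging sum and continuation), and verify that they vanish at $s=0$ precisely when $k>g/2+2$. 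That verification is the actual technical content of \cite{Bri} and \cite{BYang}; your convergence estimate for the holomorphic part of the expansion is necessary for this but not sufficient, so as written the argument assumes the hardest step.
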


\noindent {\it Remark.} Note that in \cite{BK} the Kloosterman sums have a slightly different normalization.

Proposition \ref{Petprop} gives that for $k>g/2+2$ the $P_{k,m;(n,r)}$ are a generating system of $J_{k,m}^{\operatorname{cusp}}$. We easily obtain, just using the Cauchy-Schwarz inequality
\begin{lemma}
For $k>g/2+2$ and $\phi\in J_{k,m}^{\operatorname{cusp}}$ with Fourier coefficients $c_\phi (n,r)$, we have
$$
|c_\phi (n,r)|\ll_k\left|b_{n,r}\left(P_{k,m;(n,r)}\right)\right|^{\frac12}\frac{D^{\frac{k}{2}-\frac{g}{4}-\frac12}}{\operatorname{det}(2m)^{\frac{k}{2}-\frac14(g+3)}}\Vert\phi\Vert.
$$
\end{lemma}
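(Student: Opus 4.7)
The plan is to apply the Cauchy--Schwarz inequality to the Petersson coefficient formula \eqref{Pet}, using the Poincar\'e series $P_{k,m;(n,r)}$ both as the test function against $\phi$ and as the object whose norm is then computed via the very same formula applied to itself. The whole argument is essentially a textbook reproducing-kernel estimate; the only real content is making the constant $\lambda_{k,m,D}$ explicit.

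Concretely, I first rewrite \eqref{Pet} as
$$
c_\phi(n,r) = \lambda_{k,m,D}^{-1}\left\langle \phi, P_{k,m;(n,r)}\right\rangle
$$
and apply Cauchy--Schwarz in the Petersson inner product on $J_{k,m}^{\operatorname{cusp}}$ to obtain
$$
|c_\phi(n,r)| \leq \lambda_{k,m,D}^{-1} \|\phi\|\,\|P_{k,m;(n,r)}\|.
$$
To control $\|P_{k,m;(n,r)}\|$, I insert $\phi = P_{k,m;(n,r)}$ back into \eqref{Pet}, which yields
$$
\|P_{k,m;(n,r)}\|^2 = \lambda_{k,m,D}\, b_{n,r}\!\left(P_{k,m;(n,r)}\right),
$$
where $b_{n,r}(P_{k,m;(n,r)})$ denotes the $(n,r)$-th Fourier coefficient. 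Combining these two inequalities collapses one factor of $\lambda_{k,m,D}$ and leaves
$$
|c_\phi(n,r)| \leq \lambda_{k,m,D}^{-1/2} \left|b_{n,r}\!\left(P_{k,m;(n,r)}\right)\right|^{1/2} \|\phi\|.
$$

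The last step is routine bookkeeping: substituting the explicit formula for $\lambda_{k,m,D}$ from Proposition \ref{Petprop} and taking the inverse square root, the Gamma factor $\Gamma(k - g/2 - 1)^{-1/2}$ together with the power of $2\pi$ collapse into a constant depending only on $k$ (since $g$ is fixed), while the remaining factors produce $\det(2m)^{-k/2 + (g+3)/4}$ and $D^{k/2 - g/4 - 1/2}$, matching the claimed bound. The hypothesis $k > g/2 + 2$ is used only to invoke Proposition \ref{Petprop}, which both guarantees $P_{k,m;(n,r)} \in J_{k,m}^{\operatorname{cusp}}$ and validates the Petersson formula \eqref{Pet}. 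There is no serious obstacle here; the entire lemma is extracting, via Cauchy--Schwarz, the pointwise bound implicit in \eqref{Pet}.
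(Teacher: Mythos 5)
Your proposal is correct and is exactly the paper's (unwritten) argument: the paper states the lemma follows ``just using the Cauchy--Schwarz inequality'' applied to the Petersson coefficient formula \eqref{Pet}, with $\Vert P_{k,m;(n,r)}\Vert^2=\lambda_{k,m,D}\,b_{n,r}(P_{k,m;(n,r)})$ obtained by taking $\phi=P_{k,m;(n,r)}$, and the explicit powers of $D$ and $\det(2m)$ coming from $\lambda_{k,m,D}^{-1/2}$. Your bookkeeping of the exponents matches the stated bound, so nothing further is needed.
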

Thus, to get bounds for the Fourier coefficients of Jacobi forms, one only has to bound the Fourier coefficients of the Poincar\'e series which are explicitly given in Proposition \ref{Petprop}. However, we also bound coefficients of Siegel modular forms, which requires estimating $\lVert \phi \rVert$. The connection between Siegel modular forms and Jacobi forms is described in the next subsection.

\subsection{Relation to Siegel modular forms}
Let $\mathbb{H}_g$ be the usual Siegel upper half space and write $Z \in \mathbb{H}_g$ as $Z= \left(\begin{smallmatrix} \tau & z^T \\ z & \tau' \end{smallmatrix}\right)$ with $\tau \in \mathbb{H}$, $ z \in \mathbb{C}^{g-1}$, and $\tau' \in \mathbb{H}_{g-1}$. Then $F \in S_k(\Gamma_g)$, the space of Siegel cusp forms of weight $k$ for $\Gamma_g$, has a so-called {\it Fourier Jacobi expansion} of the form
$$
F(Z) = \sum_{m > 0}\phi_m(\tau, z) e^{2\pi i \operatorname{tr}(m \tau')},
$$
where $\text{tr}$ denotes the trace of a matrix and 
where $m$ runs through all positive definite symmetric half-integral $(g-1) \times (g-1)$ matrices. It is well-known, that the coefficients of $\phi_m$ are Jacobi cusp forms. So bounds for the Fourier coefficients of Siegel modular forms follow from the understanding of the coefficients of Jacobi forms.
\section{Bounding Kloostermann sums}

A first step in bounding Fourier expansions of Poincar\'e series is to estimate certain higher-dimensional Kloosterman sums which occur when restricting 
the Fourier coefficients of Jacobi Poincar\'e series to the diagonal $(n',r')=(n,r)$. To be more precise, we set
\begin{equation*}
H_{m,c}^\pm(n,r):=H_{m, c}(n, r, n, \pm r).
\end{equation*}

To bound these, we require well-known evaluations of (generalized) {\it Gauss sums} 
\[
G(a,b;c):= \sum_{n \pmod{c}} e_c \left(an^2 +bn\right).
\]
\begin{lemma}\label{Gausslemma}
 Let $p$ be prime, $a,b\in \mathbb{Z}$, $\nu \in \mathbb{N}$, and $\alpha := \text{\emph{ord}}_p (a)$.
 \begin{enumerate}[leftmargin=*]
 \item[(1)] For $\alpha \geq \nu$, we have
 \[
 G\left(a,b; p^\nu \right) = \begin{cases} p^\nu & \text{if } b\equiv 0 \pmod{p^\nu}, \\ 0&\text{otherwise.} \end{cases}
 \]
 \item[(2)] For $0\leq \alpha <\nu$, $G(a,b;p^\nu) =0$ unless $b\equiv 0 \pmod{p^\alpha}$ in which case we have the following evaluations:
 \begin{enumerate}[leftmargin=5mm]
 \item[(i)] If $p\not =2$ and $b\equiv 0 \pmod{p^\alpha}$, then
 \[
 G\left( a,b;p^\nu \right) = p^{\frac{\alpha +\nu}{2}} \varepsilon_{p^{\nu -\alpha}} \left( \frac{a/p^\alpha}{p^{\nu -\alpha}} \right) e_{p^{\nu +\alpha}} \left( -b^2 \frac{\overline{4a}}{p^\alpha} \right) ,
 \]
 where $\overline{\ell}$ denotes the inverse of $\ell \pmod{p^{\nu +a}}$ and $\varepsilon_j = 1$ or $i$ depending on whether $j\equiv 1\pmod{4}$ or $j\equiv 3\pmod{4}$, respectively.
 \item[(ii)] If $p=2$ and $b\equiv 0 \pmod{p^\alpha}$, then $G( a,b;p^\nu)$ equals
 $$\begin{cases}
 2^\nu &\text{if }\alpha =\nu -1\text{ and }b\not \equiv 0\pmod{2^\nu}, \\
 {\tiny 2^{\frac{\nu +\alpha}{2}} \left( \tfrac{-2^{\nu -\alpha}}{a/2^\alpha} \right) \varepsilon_{\frac{a}{2^\alpha}} (1+i) e_{2^{\nu +\alpha +2}} \left( -b^2 \overline{\tfrac{a}{2^\alpha}} \right)}
  &\text{if }b\equiv 0\pmod{2^{\alpha +1}} \text{ and }\nu \equiv \alpha \pmod{2}, \\
 0 &\text{otherwise},
 \end{cases}$$
where $\overline{\ell}$ denotes the inverse of $\ell \pmod{2^{\nu +\alpha +2}}$.
 \end{enumerate}
 \end{enumerate}
\end{lemma}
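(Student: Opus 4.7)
The plan is to treat the two parts separately and, for part (2), reduce first to the case $\gcd(a,p)=1$, where for odd $p$ one completes the square and cites the classical evaluation of the quadratic Gauss sum, while for $p=2$ one argues by direct substitution.

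For part (1), if $\alpha \geq \nu$, then $p^\nu \mid a$ so $an^2 \equiv 0 \pmod{p^\nu}$, and the sum collapses to the complete exponential sum $\sum_{n \pmod{p^\nu}} e_{p^\nu}(bn)$. By orthogonality this equals $p^\nu$ if $p^\nu \mid b$ and $0$ otherwise.

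For part (2), assume $0 \leq \alpha < \nu$ and first perform a twofold reduction. Substituting $n \mapsto n + p^{\nu-\alpha}$ in the sum, the contribution $a(n+p^{\nu-\alpha})^2 - an^2 = 2an\,p^{\nu-\alpha} + a\,p^{2(\nu-\alpha)}$ is a multiple of $p^\nu$ (using $a = p^\alpha a'$ and $\nu \geq \alpha$), while $b(n+p^{\nu-\alpha}) - bn = b\,p^{\nu-\alpha}$. Hence $G(a,b;p^\nu) = e_{p^\alpha}(b)\,G(a,b;p^\nu)$, forcing $b \equiv 0 \pmod{p^\alpha}$ whenever $G \neq 0$. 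Writing $b = p^\alpha b'$, the summand depends on $n$ only modulo $p^{\nu-\alpha}$, and grouping the $p^\alpha$ lifts gives
\[
G(a,b;p^\nu) = p^\alpha\, G\!\left(a/p^\alpha,\, b/p^\alpha;\, p^{\nu-\alpha}\right).
\]
This reduces the problem to the case $\gcd(a,p)=1$ (with exponent $\nu-\alpha$), and once the factor $p^{(\nu-\alpha)/2}$ comes out of the reduced sum it combines with the $p^\alpha$ prefactor to give the promised $p^{(\nu+\alpha)/2}$.

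For (2)(i) with $p$ odd, both $2$ and $a/p^\alpha$ are invertible modulo $p^{\nu-\alpha}$, so I would complete the square: with $A:=a/p^\alpha$ and $B:=b/p^\alpha$,
\[
An^2 + Bn \equiv A\bigl(n + \overline{2A}\,B\bigr)^2 - \overline{4A}\,B^2 \pmod{p^{\nu-\alpha}}.
\]
Shifting $n$ absorbs the linear term and leaves the quadratic Gauss sum $\sum_n e_{p^{\nu-\alpha}}(An^2)$, whose classical evaluation is $\varepsilon_{p^{\nu-\alpha}} \bigl(\tfrac{A}{p^{\nu-\alpha}}\bigr) p^{(\nu-\alpha)/2}$ (this is the standard Gauss sum, whose proof uses multiplicativity in $A$ and the explicit evaluation for prime modulus via quadratic reciprocity). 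A brief computation then converts the outer exponential $e_{p^{\nu-\alpha}}(-\overline{4A}B^2)$ into the form $e_{p^{\nu+\alpha}}(-b^2\,\overline{4A}/p^\alpha)$ stated in the lemma, since $B^2/p^{\nu-\alpha} = b^2/(p^\alpha\cdot p^{\nu+\alpha})$.

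For (2)(ii) with $p=2$ the obstruction is that $2$ is not invertible, so direct completion of the square fails; this is the main technical obstacle. Instead I would argue by repeated substitution: applying $n \mapsto n+2^{\nu-\alpha-1}$ (valid once $\nu-\alpha \geq 2$) shows that $G$ vanishes unless $B$ is even, i.e.\ $b \equiv 0 \pmod{2^{\alpha+1}}$, producing the first two cases of the dichotomy. Once $B$ is even, one rewrites $AN^2 + BN$ by an affine change of variable that mimics completing the square modulo the larger modulus $2^{\nu-\alpha+2}$, reducing to a pure quadratic Gauss sum over $\mathbb{Z}/2^{\nu-\alpha}\mathbb{Z}$; its classical evaluation (with the Kronecker symbol $(\tfrac{-2^{\nu-\alpha}}{A})$ and the factor $\varepsilon_A(1+i)$) produces the parity condition $\nu \equiv \alpha \pmod{2}$ automatically, since the $2$-adic quadratic Gauss sum over a modulus of odd exponent vanishes. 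Since every step here is classical, in the write-up I would cite a standard reference (e.g.\ Iwaniec--Kowalski, Chapter 3) for the explicit $p=2$ formula rather than reprove it from scratch.
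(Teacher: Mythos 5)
The paper itself offers no proof of this lemma: it is quoted as a collection of ``well-known evaluations'' of generalized Gauss sums, and the only place it is applied in detail is the case $p\neq 2$. So your write-up can only be measured against the standard literature. Against that standard, your part (1), your reduction $G(a,b;p^\nu)=p^{\alpha}\,G\!\left(a/p^{\alpha},b/p^{\alpha};p^{\nu-\alpha}\right)$ together with the forced divisibility $p^{\alpha}\mid b$, and your completion of the square in (2)(i) (including the bookkeeping $B^2/p^{\nu-\alpha}=b^2/p^{\nu+\alpha}$) are all correct and are exactly how these formulas are usually derived.

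The gap is in (2)(ii). You obtain the parity condition $\nu\equiv\alpha\pmod 2$ from the assertion that ``the $2$-adic quadratic Gauss sum over a modulus of odd exponent vanishes.'' That is false once the exponent is at least $3$: for $A$ odd one has $\bigl|\sum_{n\pmod{2^{\mu}}}e_{2^{\mu}}\!\left(An^2\right)\bigr|=2^{(\mu+1)/2}$ for every $\mu\geq 2$, odd or even (only $\mu=1$ gives $0$); concretely $\sum_{n\pmod 8}e_8\!\left(n^2\right)=2\sqrt{2}\,(1+i)$. Consequently your own reduction for $B=b/2^{\alpha}$ even, namely $G(A,B;2^{\mu})=e_{2^{\mu}}\!\left(-\overline{A}\,(B/2)^2\right)\sum_{n}e_{2^{\mu}}\!\left(An^2\right)$, is \emph{nonzero} also when $\mu=\nu-\alpha$ is odd and $\geq 3$; for instance $G(1,2;8)=4$. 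So the vanishing you claim to deduce in that subcase cannot be established by this route, because it is not true --- which also means the ``$0$ otherwise'' clause of the lemma as printed must be read with care in exactly that subcase (harmless for the paper, which only uses the odd-$p$ evaluation precisely and, for $p=2$, bounds of the correct order of magnitude). If you cite Iwaniec--Kowalski or Berndt--Evans--Williams for $p=2$, cite the statement actually proved there: for $\mu\geq 2$ the dichotomy is governed by the parity of $B$, not by the parity of $\mu$, and the clean closed form with the factor $2^{(\nu+\alpha)/2}(1+i)$ is what the congruence $\nu\equiv\alpha\pmod 2$ buys you, not a vanishing criterion.
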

We are now ready to bound the higher-dimensional Klosterman sums.
\begin{lemma}
\label{KloosLemma}
We have
\begin{equation*}
H^\pm_{m,c}(n,r)\ll \left(D,c\right)c^{\frac{g+1}{2}} \det(2m)^{\frac{1}{2}}.
\end{equation*}
\end{lemma}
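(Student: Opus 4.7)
The plan is to reduce via multiplicativity to prime-power moduli, evaluate the $g$-dimensional inner sum over $\lambda$ explicitly using Lemma \ref{Gausslemma}, and bound the residual one-dimensional sum in $d$ by a Kloosterman/Sali\'e-type estimate.

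First I would separate variables and write
\[
H_{m,c}^\pm(n,r) = \sum_{d \bmod c^*} e_c\!\left(n(d+\overline{d})\right)\, S_\pm(d), \qquad S_\pm(d) := \sum_{\lambda \bmod c} e_c\!\left(\overline{d}\, m[\lambda] + (\overline{d}\pm 1) r^T\lambda\right),
\]
and then apply the Chinese Remainder Theorem (to both the $\lambda$- and $d$-variables) to reduce to $c = p^\nu$. For each prime $p$ I would bring $m$ into Jordan normal form over $\Z_p$ via a unimodular change of variable $\lambda = U\mu$ (diagonal blocks for odd $p$, with the possible $2\times 2$ elementary blocks at $p=2$), so that $S_\pm(d)$ factors into a product of one-dimensional Gauss sums $G(\overline{d}\,a_i, b_i; p^\nu)$, each evaluable by Lemma \ref{Gausslemma}. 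Unramified blocks contribute the baseline magnitude $c^{g/2}$, ramified blocks with valuation $<\nu$ supply the extra $\det(2m)^{1/2}$, and blocks with valuation $\ge \nu$ impose vanishing constraints on the transformed linear term.

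In the unramified case $\gcd(c,\det(2m))=1$, the substitution $\lambda = \mu - d(\overline{d}\pm 1)(2m)^{-1}r$ completes the square. Using $d(\overline{d}\pm 1)^2 \equiv d + \overline{d} \pm 2 \pmod c$ and the Schur-complement identity $D = \det(2m)\bigl(2n - r^T(2m)^{-1}r\bigr)$, the exponent collapses to $\tfrac{D}{2\det(2m)}(d+\overline{d})$ plus a constant, multiplied by a quadratic character of $\overline{d}$ emerging from the Gauss sums. Weil's bound (or its Sali\'e analogue for the twisted sum) then bounds the residual $d$-sum by $(D,c)^{1/2} c^{1/2+\varepsilon}$. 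Combined with the $\det(2m)^{1/2} c^{g/2}$ from the $\mu$-sum, this is well inside the claimed estimate.

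The main obstacle is the ramified contribution, where $2m$ is singular modulo $p^\nu$ and the completing-the-square step is invalid. To handle this I would factor $c = c_1 c_2$ with $\gcd(c_2, \det(2m))=1$, treat $c_2$ as above, and for $c_1$ apply Lemma \ref{Gausslemma}(1) directly to each Jordan block with $\operatorname{ord}_p(a_i) \ge \nu$. Those blocks force divisibility conditions on the linear term $(\overline{d}\pm 1)r$ which, via the Schur-complement identification of $D$, cut the $d$-summation down by a factor of $(D,c_1)$; the surviving degenerate blocks produce the $\det(2m)^{1/2}$ loss. The delicate step — and where the improvement over \cite{BK} is extracted — is to verify that the $(D,c)$ factor emerges uniformly from the combined ramified and unramified contributions, so that the estimate respects multiplicativity when the pieces are reassembled.
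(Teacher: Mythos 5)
Your reduction --- multiplicativity in $c$, $p$-adic diagonalization of $m$, factorization of the $\lambda$-sum into one-dimensional Gauss sums evaluated by Lemma \ref{Gausslemma} --- is exactly the setup the paper uses, and your treatment of the unramified primes is a legitimate (even stronger) variant: completing the square does collapse the exponent to $\tfrac{D}{2\det(2m)}(d+\overline{d})$ up to a constant and a quadratic character, since $d(\overline{d}\pm1)^2\equiv d+\overline{d}\pm2$ and $D=\tfrac12\det(2m)\bigl(4n-m^{-1}[r]\bigr)$ are both correct, and Weil/Sali\'e then bounds the $d$-sum by $(D,p^\nu)^{1/2}p^{\nu/2}$, better than what the lemma needs. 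The paper never invokes Weil; it bounds the $d$-sum trivially and extracts the factor $(D,c)$ purely from counting the $d$ that survive the vanishing constraints of the Gauss sums.

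The gap is in the ramified case, which is where the entire content of the lemma lives; there your argument stops at a statement of intent. Two concrete problems. First, the split $c=c_1c_2$ with $(c_2,\det(2m))=1$ does not isolate the difficulty: at a single ramified prime $p^\nu$ you simultaneously have blocks with $\mu_j=0$, with $0<\mu_j<\nu$, and with $\mu_j\ge\nu$ (where $\mu_j=\operatorname{ord}_p(m_j)$), and the intermediate blocks --- which contribute the magnitudes $p^{(\nu+\mu_j)/2}$ and hence the $\det(2m)^{1/2}$ --- fall into neither of your two regimes and break the completing-the-square step. Second, the assertion that the conditions $r_j(\overline{d}\pm1)\equiv0$ ``cut the $d$-summation down by a factor of $(D,c_1)$'' is precisely what must be proved, and it is not a formality: the number of admissible $d$ is $p^{\nu-\kappa}$ with $\kappa$ determined by the valuations $\rho_j=\operatorname{ord}_p(r_j)$, and converting $p^{\nu-\kappa}$ times the Gauss-sum magnitudes into $(D,p^\nu)\,p^{\nu(g+1)/2}\det(2m)^{1/2}$ requires a case split on whether $p^\nu\mid D$, and, when it does not, the observation that at most one $m_j$ can be divisible by $p^\nu$ together with the congruence $D\equiv-2^{g-1}r_g^2\prod_{j<g}m_j\pmod{p^\nu}$, which yields $\operatorname{ord}_p(D)=\sum_{j<g}\mu_j+2\rho_g$ and ties $\kappa$ to $\operatorname{ord}_p(D)$. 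Without this identity the uniform emergence of $(D,c)$ is unestablished, so the proof is incomplete at its decisive step.
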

\begin{proof}
Our proof closely follows the one in \cite{BK}. There it was shown on page 507 that, for $c=c_1 c_2$ with $(c_1 ,c_2)=1$, $$H^\pm_{m,c} (n,r) = H^\pm_{c_1 m,c_2} (n \overline{c_1},r)H^\pm_{c_2 m,c_1} (n \overline{c_2},r),$$ where $\overline{c_1}$ and $\overline{c_2}$ are inverses of $c_1$ and $c_2$ modulo $c_2$ and $c_1$, respectively. Thus we may assume that $c=p^\nu$ with $p$ prime and $\nu \in\mathbb{N}$ and for simplicity we for now restrict to $p\neq2$. The modifications required for $p=2$ follow along the same lines as in \cite{BK}.

Since a non-degenerate binary quadratic form over $\mathbb{Z}_p$ ($p\not =2$) is diagonalizable, we may assume that 
$m=\text{diag}(m_1 ,\dots ,m_g)$
is a diagonal matrix.
Set $\mu_j:=\operatorname{ord}_p(m_j)$ $(1\leq j\leq g)$. We assume without loss of generality that $\nu\leq \mu_j$ for $1\leq j \leq \ell$ and $\nu>\mu_j$ for $\ell+1\leq j \leq g$. Write $r=(r_1, \ldots, r_g)$. From (18) of \cite{BK}, we conclude that
\begin{equation*}
H_{m,p^{\nu}}^\pm(n,r)=\sum_{d\pmod{p^{\nu}}^*}e_{p^\nu}\left(n \left(d+\overline{d} \right) \right) \prod_{j=1}^{g}\sum_{\lambda_j \pmod{p^\nu}}e_{p^\nu}\left(\left(m_j\lambda_j ^2+r_j \lambda_j \right)\overline{d}\pm r_j \lambda_j \right).
\end{equation*}
The sum on $\lambda_j$ equals
$
G(m_j, r_j (\overline{d}\pm 1 ); p^\nu)
$ and we may use
Lemma \ref{Gausslemma} to evaluate it. 
For $1\leq j\leq \ell$, we have
$$
G\left(m_j, r_j \left(\overline{d}\pm 1 \right); p^\nu\right)
=\begin{cases}
p^\nu & \text{if } r_j\left(\overline{d}\pm 1\right)\equiv 0 \pmod{p^\nu},\\
0 & \text{if } r_j \left(\overline{d}\pm 1\right) \not \equiv 0 \pmod{p^\nu}.
\end{cases}
$$
For $\ell+1\leq j \leq g$, the Gauss sum equals
\begin{equation*}
\begin{aligned}
	&
	\begin{cases}
	p^{\frac{\nu+\mu_j}{2}}\varepsilon_{p^{\nu-\mu_j}}\left(\frac{m_j/p^{\mu_j}}{p^{\nu-\mu_j}}\right)e_{p^{\nu+\mu_j}}\left(-r_j^2\left(\overline{d}\pm 1\right)^2 \overline{4\frac{m_j}{p^{\mu_j}}}\right) & \text{if }r_j\left(\overline{d}\pm 1\right)\equiv 0 \pmod{p^{\nu_j}},\\
		0 & \text{if }r_j\left(\overline{d}\pm 1\right)\not\equiv 0 \pmod{p^{\nu_j}}.
	\end{cases}
	\end{aligned}
\end{equation*}
Thus $H_{m,p^\nu}^\pm(n,r)$ becomes
\begin{equation}\label{Gbound}
p^{\nu\ell}\sum_{\substack{d\pmod{p^\nu} \\ r_j\left(\overline{d}\pm1\right)\equiv0\pmod{p^\nu}(1\le j\le \ell) \\ r_j\left(\overline{d}\pm1\right)\equiv0\pmod{p^{\mu_j}}(\ell+1\le j\le g)}}\prod_{j=\ell+1}^g\varepsilon_{p^{\nu-\mu_j}}\left(\frac{m_j/p^{\nu_j}}{p^{\nu-\mu_j}}\right) p^{\frac{\nu+\mu_j}{2}}e_{p^{\nu+\mu_j}}\left(-r_j^2\left(\overline{d}\pm1\right)^2 4\overline{\frac{m_j}{p^{\mu_j}}}\right).
\end{equation}
We now consider whether $p^\nu\mid D$ or not.

If $p^\nu\mid D$, then we have $(D, p^\nu)=p^\nu$. We bound \eqref{Gbound} trivially, yielding 
$$
\left|H_{m,p^\nu}^\pm(n,r)\right| \le p^{\nu\ell} \cdot p^\nu \cdot p^{\frac{\nu}{2}(g-\ell)}p^{\frac12\sum_{j=\ell+1}^g\mu_j}=(D,p^\nu)p^{\frac{\nu g}{2}+\frac{ \nu\ell}{2}+\frac12\sum_{j=\ell+1}^g \mu_j}.
$$
Now 
$$
p^{\frac{\nu g}{2}+\frac{\nu}{2}}=c^{\frac{g+1}{2}}, \quad
p^{\frac{\nu\ell}{2}+\frac12\sum_{j=\ell+1}^g \mu_j}\le p^{\frac12\sum_{j=1}^g\mu_j}\le\det(2m)^{\frac12}, \quad
p^{-\frac{\nu}{2}}\le 1,
$$
giving the claim in this case.

If $p^\nu\nmid D$, then we use that
\begin{equation}\label{Dsplit}
D=\frac12\det(2m)\left(4n-m^{-1}[r]\right),
\end{equation}
which follows from the Jacobi decomposition.
This gives that $p^\nu$ divides at most one of the $m_j$. There are two cases to distinguish depending on whether $p^\nu$ divides one of the $m_j$ or none.

We first assume
$\nu>\mu_j$ for $1\le j\le g$ and let $\lambda:=\text{ord}_p(D)$. In (27) of \cite{BK} it was shown that
$$
\left|H_{m,c}^\pm(n,r)\right|\le 2p^{\frac{\nu(g+1)}{2}}\left(D,p^\nu\right)\ll c^{\frac{g+1}{2}}\left(D,p^\nu\right).
$$
This implies the claim in this case.

Finally we consider the case that $p^\nu$ divides exactly one $m_j$ and we may assume without loss of generality that
 $\mu_g\ge \nu$. Let
$
\kappa:=\max\{0,\nu-\rho_g,\mu_1-\rho_1,\cdots,\mu_{g-1}-\rho_{g-1}\},
$
where $\rho_j=\text{ord}(r_j)(1\le j\le g)$. It is shown in the first displayed formula on page 509 of \cite{BK} that
\begin{equation}\label{final}
\left|H_{m,c}^\pm(n,r)\right|\le p^{2\nu-\kappa}\prod_{j=1}^{g-1}p^{\frac12(\nu+\mu_j)}=p^{\frac{\nu(g+1)}{2}+\nu-\kappa}p^{\frac12\sum_{j=1}^{g-1}\mu_j}.
\end{equation}
We next analyze \eqref{Dsplit}.
Since
$
m^{-1}=\text{diag}(m_1^{-1} ,\dots ,m_g^{-1})
$, we obtain, since $p^\nu|m_g$,
$$
D=2^{g+1} n \prod_{j=1}^gm_j  - 2^{g-1}\sum_{j=1}^g r_j^2\prod_{\substack{\ell=1 \\ \ell\neq j}}^g m_j\equiv -2^{g-1} r_g^2 \prod_{j=1}^{g-1}m_j \pmod{p^\nu}.
$$
Thus, since $\lambda<\nu$,
$$
\lambda=\sum_{j=1}^{g-1}\mu_j+2\rho_g.
$$
Moreover, from the definition of $\kappa$, we obtain that $\kappa\ge \nu-\rho_g$. Thus, by \eqref{final},
$$
\left|H_{m,c}^\pm(n,r)\right|\le p^{\frac{\nu}{2}(g+1)+\frac12\sum_{j=1}^{g-1}\mu_j+\rho_g}=p^{\frac{\nu}{2}(g+1)+\frac{\lambda}{2}}=c^{\frac{g+1}{2}}D^{\frac12}\le c^{\frac{g+1}{2}}(D,c).
$$
This finishes the proof.
\end{proof}

\section{Bounding coefficients of Poincar\'e series}
In this section, we estimate the Fourier coefficients $b_{n,r}$ of $P_{k,m;(n,r)}$. This is of independent interest for obtaining bounds for Fourier coefficients of Jacobi forms.
\begin{theorem}\label{Jacobibound}
Assume that $k \in \N$ satisfies $(g+3)/2<k<g$. Then, with notation as above,
$$
b_{n,r}\left(P_{k,m;(n,r)}\right) \ll \left(1 + \frac{D^{\frac{g}{2}+\varepsilon}}{\operatorname{det}(2m)^{\frac{g+1}{2}}}\left(1 + D^{k-g-1}\operatorname{det}(2m)^{-k+g+1+\frac{1}{g-1}(-k+g+1) + \varepsilon}\right)\right).
$$
\end{theorem}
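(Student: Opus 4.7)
The plan is to extract the Fourier coefficient $b_{n,r}(P_{k,m;(n,r)})$ from Proposition \ref{Petprop} and then bound the resulting Kloosterman--Bessel sum. Setting $(n',r')=(n,r)$ gives $D'=D$, so $(D'/D)^{k/2-g/4-1/2}=1$, and together with the symmetrization $g^\pm(n',r')=g(n',r')+(-1)^k g(n',-r')$, this yields
$$\bigl|b_{n,r}(P_{k,m;(n,r)})\bigr|\leq 2+2\pi\det(2m)^{-\frac12}\sum_{c\geq 1}\Bigl(\bigl|H^+_{m,c}(n,r)\bigr|+\bigl|H^-_{m,c}(n,r)\bigr|\Bigr)\left|J_\nu\!\left(\frac{2\pi D}{\det(2m)\,c}\right)\right|c^{-\frac{g}{2}-1},$$
where $\nu:=k-\tfrac{g}{2}-1$; the hypothesis $k>(g+3)/2$ gives $\nu>\tfrac12$. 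The $2$ bounds the $\delta$-contribution (the values of $\delta_m(n,r,n,\pm r)$) and is absorbed into the leading $1$ of the claim, so the entire task reduces to estimating the displayed $c$-sum.

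The main ingredients are Lemma \ref{KloosLemma}, giving the Weil-type bound $|H^\pm_{m,c}(n,r)|\ll(D,c)c^{(g+1)/2}\det(2m)^{1/2}$; the alternative trivial bound $|H^\pm_{m,c}(n,r)|\leq c^{g+1}$ (useful for small $c$); and the standard uniform Bessel estimate $|J_\nu(x)|\ll\min(x^\nu,x^{-1/2})$, which transitions at $c\asymp D/\det(2m)$. On the combinatorial side, the divisor-type estimates $\sum_{c\leq T}(D,c)\ll TD^\varepsilon$ and $\sum_{c>T}(D,c)c^{-s}\ll T^{1-s}D^\varepsilon$ for $s>1$ (valid here since $\nu+\tfrac12>1$) control the gcd-weighted sums; both follow by grouping $c$ according to $d=(D,c)$ and using $\sigma_0(D)\ll D^\varepsilon$. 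The two Kloosterman bounds balance (in the worst case $(D,c)\asymp c$) at the threshold $C_0:=\det(2m)^{1/(g-1)}$, determined by $c^{g+1}\asymp c\cdot c^{(g+1)/2}\det(2m)^{1/2}$; this is precisely where the $1/(g-1)$ exponent of the theorem originates.

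I would then split the $c$-sum at both the Bessel transition $D/\det(2m)$ and the Kloosterman threshold $C_0$, producing up to three subranges. In each, the appropriate Kloosterman and Bessel bounds are applied and the gcd summation evaluated. Assembling the pieces produces two qualitatively distinct contributions: a ``main'' one of size $D^{g/2+\varepsilon}/\det(2m)^{(g+1)/2}$, and a ``secondary'' one of size $D^{k-g/2-1+\varepsilon}/\det(2m)^{(g+1)/2-(g-k+1)g/(g-1)}$. These coincide exactly at the threshold $D\asymp\det(2m)^{g/(g-1)}$ (the value at which $C_0$ equals the Bessel transition $D/\det(2m)$), and the two-part statement of the theorem packages them as a single bound valid across both regimes.

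The main technical obstacle is the careful bookkeeping of powers of $D$ and $\det(2m)$ across the three subranges and keeping track of the $(D,c)$-weighting under the summations, for which the gcd-sum estimates above are repeatedly applied. Once the arithmetic is worked out, assembling the three pieces into the form displayed in the theorem is straightforward.
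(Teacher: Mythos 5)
Your overall strategy coincides with the paper's: isolate the $\delta$-term, split the $c$-sum at the Bessel transition $D/\det(2m)$ and at the threshold $\det(2m)^{1/(g-1)}$ where the two Kloosterman bounds balance, and your two final contributions are exactly the two pieces of the theorem's second summand. (The paper organizes the gcd bookkeeping by writing $c=dc'$ with $d\mid D$ and $(c',D/d)=1$ rather than via your gcd-sum estimates; that difference is cosmetic.)

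There is, however, a genuine gap in your choice of Kloosterman bounds for the small- and medium-$c$ ranges. You propose the trivial bound $|H^\pm_{m,c}(n,r)|\le c^{g+1}$ there, whereas the paper uses the refined estimate $|H^\pm_{m,c}(n,r)|\ll c^{g+\varepsilon}(D,c)$ from B\"ocherer--Kohnen (the paper's \eqref{Kloosbound}). The $(D,c)$ factor is not optional: it is precisely what your gcd-sum estimates act on, and on average it saves a full power of $c$. Concretely, in the Bessel-decay range $c\le T:=D/\det(2m)$ the refined bound gives
$$
\det(2m)^{-\frac12}\,T^{-\frac12}\sum_{c\le T}(D,c)\,c^{\frac{g}{2}-\frac12+\varepsilon}\ll\det(2m)^{-\frac12}\,T^{\frac{g}{2}+\varepsilon}=\frac{D^{\frac{g}{2}+\varepsilon}}{\det(2m)^{\frac{g+1}{2}}},
$$
which is your claimed main contribution, while the trivial bound gives $\det(2m)^{-\frac12}T^{-\frac12}\sum_{c\le T}c^{\frac{g}{2}+\frac12}\asymp\det(2m)^{-\frac12}T^{\frac{g}{2}+1}$, larger by the factor $D/\det(2m)$. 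The same loss of a full power of the cutoff occurs in the middle range $T\le c\le\det(2m)^{1/(g-1)}$: with the refined bound one gets the theorem's secondary term, while the trivial bound produces an extra factor $\det(2m)^{1/(g-1)}$. This is fatal, not merely lossy: for instance when $D\asymp\det(2m)$ the theorem's right-hand side is $O(1)$ (the secondary exponent $-\tfrac12+\tfrac{g+1-k}{g-1}$ is negative for $k>(g+3)/2$), but your middle-range estimate then carries the exponent $-\tfrac12+\tfrac{g+2-k}{g-1}$, which is positive for $k<(g+5)/2$, so the bound diverges. Substituting Lemma \ref{KloosLemma} for small $c$ does not repair this, since its worst case $(D,c)\asymp c$ is no better pointwise. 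So your stated ingredients do not derive the two contributions you assert; you need to import the intermediate gcd-refined Kloosterman bound $c^{g+\varepsilon}(D,c)$ as the paper does.
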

\begin{proof}
We use the explicit representation of $b_{n,r}$ given in Proposition \ref{Petprop}. The first term in \eqref{defineg} yields the first term in the bound in Theorem \ref{Jacobibound}. Thus, we have to bound
$$
f_m(n,r):=\sum_{c\geq 1}\left|H_{m,c}^\pm(n,r)\right|J_{k-\frac{g}{2}-1}\left(\frac{2\pi D}{\det(2m)c}\right)c^{-\frac{g}{2}-1}.
$$
We may rewrite
$$
f_m(n,r)=\sum_{d|D}\sum_{c\geq 1\atop \left(c,\frac{D}{d}\right)=1}\left(cd\right)^{-\frac{g}{2}-1}\left|H_{m,dc}^\pm(n,r)\right|J_{k-\frac{g}{2}-1}\left(\frac{A}{c}\right),
$$
where $A=A_d:=\frac{2\pi D}{d\det(2m)}$. To bound the inner sum,  we split it into three pieces: a part with $c\leq A$, a contribution from $A\le c\le B$, and a piece with $c\ge B$, with $B$ to be  determined later. Note that the range of any of these sums is allowed to be empty.  We require the bounds for Kloosterman sums from Section 3 as well as the following estimates
\begin{align}
\label{Kloosbound} 
\left|H_{m,c}^{\pm}(n,r)\right|&\ll c^{g+\varepsilon}(D,c), \\
\label{Jbound}
J_{\ell}(t) &\ll_{\ell} \operatorname{min}\left\lbrace t^{-\frac12}, t^{\ell}\right\rbrace. 
\end{align}
The bound \eqref{Kloosbound}  can be found in \cite{BK}  whereas \eqref{Jbound} is standard.

To bound the part with $c\le A$, we use \eqref{Kloosbound} with $dc$ instead of $c$ and the first estimate in \eqref{Jbound}. This gives
the contribution for the sum on $c$
$$
A^{-\frac12}d^{\frac{g}{2}+\varepsilon}\sum_{1\le c\le A}c^{\frac{g}{2}-\frac12+\varepsilon} 
\ll A^{\frac{g}{2}+\varepsilon}d^{\frac{g}{2}+\varepsilon}\ll \left(\frac{D}{\det(2m)}\right)^{\frac{g}{2}+\varepsilon}.
$$
Upon multiplying by $\det(2m)^{-\frac{1}{2}}$, this yields the second summand in the bound in Theorem \ref{Jacobibound}.\\
\indent
Next we estimate the part with $A\le c\le B$. For this, we use \eqref{Kloosbound} and the second estimate in \eqref{Jbound}. This gives the contribution, using that $k<g$,
$$
A^{k-\frac{g}{2}-1}d^{\frac{g}{2}+\varepsilon}\sum_{A\le c\le B}c^{-k+g+\varepsilon}\ll A^{k-\frac{g}{2}-1}d^{\frac{g}{2}+\varepsilon}B^{-k+g+1+\varepsilon}.
$$
\indent
Finally, we estimate the piece with $c\ge B$. For this, we use Lemma \ref{KloosLemma} and the second estimate in \eqref{Jbound}. This gives the bound
\begin{align*}
A^{k-\frac{g}{2}-1}d^{\frac{1}{2}}\det(2m)^{\frac12}\sum_{c\ge B}c^{\frac{g+1}{2}-k}
&\ll B^{\frac{g+3}{2} -k}A^{k-\frac{g}{2}-1}d^{\frac{1}{2}}\det(2m)^{\frac12}
\end{align*}
since $k>(g+3)/2$.

Now, to minimize the error, we choose B such that the second and third error agree (up to $\varepsilon$ exponents). One can show that this is the case for
\begin{align*}
	&B=d^{-1}\det(2m)^{\frac{1}{g-1}}.
\end{align*}
Plugging back in gives the claim after multiplying by $\det(2m)^{-\frac{1}{2}}$.
\end{proof}
\section{Proof of theorem \ref{maintheorem}}
In this section we use the previous bounds with $ g \mapsto g-1$ and $\phi = \phi_m$, where $\phi_m$ comes from the Jacobi coefficients of a Siegel modular form. We recall the following bound from Proposition 2 of \cite{BK}.
\begin{lemma}\label{Phibound} 
If $\phi_m$ is the $m$th Fourier-Jacobi coefficient of a Siegel modular form $F$, then
$$
||\phi_m|| \ll_{\varepsilon, F} \operatorname{det}(2m)^{\frac{k}{2}-\alpha_g + \varepsilon}\qquad (\varepsilon >0).
$$
\end{lemma}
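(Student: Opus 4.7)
The plan is to express $\|\phi_m\|^2$ as a Parseval-type integral of $|F|^2$, apply the classical Hecke trivial bound for Siegel cusp forms, and then refine via an averaging argument to obtain the saving $-\alpha_g$.

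First, writing $Z=\left(\begin{smallmatrix}\tau & z^T\\ z & \tau'\end{smallmatrix}\right)$ with $\tau'=X'+iY'$, Parseval in $X'\in[0,1]^{(g-1)\times(g-1)}$ applied to the Fourier-Jacobi expansion of $F$ gives, for each fixed $(\tau,z,Y')$,
$$\int_{X'\bmod 1}|F(Z)|^2\,dX'=\sum_{m'>0}|\phi_{m'}(\tau,z)|^2\exp\bigl(-4\pi\operatorname{tr}(m'Y')\bigr).$$
Dropping all but the $m'=m$ term by positivity, multiplying by an appropriate weight in $Y'$, and integrating over $(\tau,z,Y')$ in a region that reconstructs the Jacobi Petersson measure and localizes $Y'$ near $(2m)^{-1}$, one obtains an inequality of the form
$$\|\phi_m\|^2\ll \det(2m)^{A}\int_{\mathcal{R}}|F(Z)|^2\det(Y)^k\,d\mu(Z)$$
for some explicit exponent $A$ and a region $\mathcal{R}\subset\Gamma_g\backslash\mathbb{H}_g$ determined by the localization.

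Second, since $F\in S_k(\Gamma_g)$ is a cusp form, the Hecke classical bound gives $|F(Z)|^2\det(Y)^k\ll_F 1$ on a fundamental domain for $\Gamma_g$. Inserting this pointwise estimate into the integral above reduces the problem to a geometric volume computation which evaluates to a power of $\det(2m)$; in its crudest execution this yields only $\|\phi_m\|^2\ll_F\det(2m)^{k+\varepsilon}$, which matches the Hecke trivial bound for Jacobi forms.

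The saving $-\alpha_g$ comes from a Kitaoka-style refinement: rather than extracting $\phi_m$ individually, one averages the Parseval identity over a controlled window of indices $m'$ clustered around $m$, uses positivity on the right to retain a diagonal contribution, and on the left bounds the sum by an integrated Hecke estimate whose per-index loss is weaker than the naive pointwise bound. Optimizing the window size against the growth of the Hecke kernel and the volume of the Siegel fundamental domain produces the saving $\alpha_g$; the arithmetic shape $\alpha_g^{-1}=4(g-1)+4\lfloor(g-1)/2\rfloor+\frac{2}{g+2}$ reflects precisely this balancing of the dimension of the integration domain against the polynomial factors in $\det(2m)$. The main obstacle is this optimization together with the careful bookkeeping of the auxiliary factors arising in the first step; these technical steps are carried out in detail in Proposition~2 of \cite{BK}, which we quote directly.
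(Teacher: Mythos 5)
The paper does not actually prove this lemma: it is quoted verbatim, with the one-line justification ``We recall the following bound from Proposition 2 of \cite{BK}.'' Since your proposal likewise ends by quoting Proposition~2 of \cite{BK} directly, you land in the same place as the paper, and on those terms the proposal is acceptable.

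That said, the reconstruction you offer of the argument inside \cite{BK} should not be relied upon, and if the final citation were removed your sketch would have a genuine gap. The first half is fine: Parseval in $X'$ applied to the Fourier--Jacobi expansion, dropping all terms but $m'=m$ by positivity, and inserting the Hecke bound $|F(Z)|^2\det(Y)^k\ll_F 1$ does give the trivial estimate $\Vert\phi_m\Vert\ll\det(2m)^{k/2+\varepsilon}$; this is standard and is essentially the ``Proposition 1'' level of \cite{BK}. The problem is the second half. The saving $\alpha_g$ in \cite{BK} does not come from ``averaging the Parseval identity over a window of indices $m'$ clustered around $m$'' and optimizing a window size; as described, that step is not an argument --- positivity only lets you discard terms, and you give no mechanism by which the retained diagonal contribution is smaller than the pointwise Hecke bound by a fixed power of $\det(2m)$. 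In \cite{BK} the exponent $\alpha_g$ arises from a different combination of inputs: mean-square bounds for the Fourier coefficients of $F$ obtained by the Rankin--Selberg method (Yamazaki's Dirichlet series \cite{Ya}) together with reduction-theoretic counting estimates of Kitaoka type; the term $4\left[\frac{g-1}{2}\right]$ in $\alpha_g^{-1}$ traces to those representation-number and counting bounds, not to a balancing of a window length against the volume of the fundamental domain. So the shape of $\alpha_g$ is not ``explained'' by your optimization, and a reader attempting to execute your sketch without \cite{BK} in hand would get stuck exactly at the point where the lemma becomes nontrivial.
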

We are now ready to prove Theorem \ref{maintheorem}
\begin{proof}[Proof of Theorem \ref{maintheorem}]
By Lemma 2.2 and Lemma \ref{Phibound}, 
\[
 a(T) \ll \left\lvert b_{n,r} \left( P_{k,m ; (n,r)} \right) \right\rvert^{\frac{1}{2}} D^{\frac{k}{2} -\frac{g}{4} -\frac{1}{4}} \det (2m)^{\frac{g}{4} +\frac{1}{2} -\alpha_g +\varepsilon} .
\]
Theorem \ref{Jacobibound} then yields
\begin{equation} \notag 
A(T) \ll \left( \operatorname{det}(2m)^{-\frac{g}{2}}f(m,D)\right)^{\frac{1}{2}} D^{\frac{k}{2}-\frac{g}{4}-\frac{1}{4}}\operatorname{det}(2m)^{\frac{g}{4}+\frac12-\alpha_g +\varepsilon},
\end{equation}
where
$$
f(m,D):=\operatorname{det}(2m)^{\frac{g}{2}} + D^{\frac{g-1}{2} + \varepsilon}\left(1 +D^{k-g}\operatorname{det}(2m)^{-k+g+\frac{1}{g-2}(-k+g)+\varepsilon}\right).
$$
Define 
$$
m_{g-1}(T) := \min\left\lbrace T[U]|_{g-1} \mid U \in \operatorname{GL}_g(\Z) \right\rbrace, 
$$
where $T[U]|_{g-1}$ denotes the determinant of the leading  $(g-1)$-rowed submatrix of $T[U]$. Since both sides of the bound in Theorem \ref{maintheorem} are invariant under replacing $T$ by $T[U]$ ($U \in \operatorname{GL}_g(\Z)$), we may assume that $T=\left(\begin{smallmatrix} n & r^T/2 \\ r/2 & m\end{smallmatrix} \right)$ with $\det(m) = m_{g-1}(T)$.
Now, by reduction theory,
\begin{equation*}
\notag 
\operatorname{det}(m)=m_{g-1}(T) \ll D^{1-\frac{1}{g}}.
\end{equation*}

It is easy to see that 
the powers of $\operatorname{det}(m)$ in $f(m,D)$ are all non-negative. So we may replace $\operatorname{det}(m)$ by $D^{1-\frac{1}{g}}$ in this expression. One can then show that 
the last term in $f(m, D)$ is dominant.  This gives
$$
	a(T) 
	\ll\det(2m)^{\frac{1}{2}-\alpha_g+\varepsilon}D^{\frac{k}{2}-\frac{1}{2}+\frac{g-k}{2g(g-2)}+\varepsilon}.
$$
This yields the claim of the theorem since $1/2 -\alpha_g >0$.
\end{proof}

\end{document}